\documentclass[reqno]{amsart}

\usepackage{amsmath,amssymb,amsthm}

\newtheorem{proposition}{Proposition}
\newtheorem*{thm}{Theorem}

\parindent 0pt

\begin{document}

\title[]{Sums of square roots that\\ are close to an integer}

\author[]{Stefan Steinerberger}

\address{Department of Mathematics, University of Washington, Seattle, WA 98195, USA}
 \email{steinerb@uw.edu}

\keywords{Square Root Sum Problem, $\sqrt{n} \mod 1$}
\subjclass[2020]{11J71, 37A45, 65D18, 68Q17} 
\thanks{The author is partially supported by the NSF (DMS-2123224).}

\begin{abstract}  Let $k \in \mathbb{N}$ and suppose we are given $k$ integers $1 \leq a_1, \dots, a_k \leq n$. If $\sqrt{a_1} + \dots + \sqrt{a_k}$ is not an integer, how close can it be to one?  When $k=1$, the distance to the nearest integer is $\gtrsim n^{-1/2}$. Angluin-Eisenstat observed the bound $\gtrsim n^{-3/2}$ when $k=2$. We prove there is a universal $c>0$ such that, for all $k \geq 2$, there exists a $c_k > 0$ and $k$ integers in $\left\{1,2,\dots, n\right\}$ with
$$ 0 <\|\sqrt{a_1} + \dots + \sqrt{a_k} \| \leq c_k\cdot n^{-c \cdot k^{1/3}},$$
where $\| \cdot \|$ denotes the distance to the nearest integer. This is a case of the square-root sum problem in numerical analysis where the usual cancellation constructions do not apply: even for $k=3$, constructing explicit examples of integers whose square root sum is nearly an integer appears to be nontrivial.
\end{abstract}
\maketitle

\maketitle

\section{Introduction and Results}
\subsection{Introduction}
We consider a very elementary problem: if the sum of square roots of $k$ positive integers is not an integer, how close can it be to an integer? 
Using $\| \cdot \|$ to denote the distance to the nearest integer, the answer is easy when we have a single square root: the best case is being close to a square number and
$$ \sqrt{n}  \notin \mathbb{N} \quad \implies \quad \|\sqrt{n}\| \gtrsim \frac{1}{\sqrt{n}}.$$
The question becomes more interesting when $k=2$. There is a nice solution.

\begin{thm}[Angluin \& Eisenstat \cite{angluin}] Let $1 \leq a,b \leq n$. If $\sqrt{a} + \sqrt{b} \notin \mathbb{N}$, then
$$ \| \sqrt{a} + \sqrt{b}\| \gtrsim \frac{1}{n^{3/2}}.$$
\end{thm}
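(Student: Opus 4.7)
The natural approach is a resultant/conjugate argument: manufacture an integer-valued quantity that vanishes only in a degenerate case, and estimate its size.

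Let $m$ be the nearest integer to $\sqrt{a}+\sqrt{b}$, and consider the polynomial whose roots are the four sign-conjugates,
\begin{equation*}
P(x) \;=\; \bigl(x-\sqrt a-\sqrt b\bigr)\bigl(x-\sqrt a+\sqrt b\bigr)\bigl(x+\sqrt a-\sqrt b\bigr)\bigl(x+\sqrt a+\sqrt b\bigr).
\end{equation*}
Pairing the factors in two ways and multiplying out, one finds $P(x) = (x^2-a-b)^2 - 4ab$, a polynomial in $\mathbb{Z}[x]$. Evaluating at the integer $m$ therefore gives
\begin{equation*}
P(m) \;=\; (m^2-a-b)^2 - 4ab \;\in\; \mathbb{Z}.
\end{equation*}

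The first step is to argue that $P(m)\neq 0$ under the hypothesis $\sqrt a+\sqrt b\notin\mathbb{N}$. If $P(m)=0$, then $m$ equals one of $\pm\sqrt a\pm\sqrt b$. The cases $m=\pm(\sqrt a+\sqrt b)$ are ruled out directly (positivity, or the hypothesis). The remaining case $m=\pm(\sqrt a-\sqrt b)$ forces, after squaring, that $\sqrt{ab}\in\mathbb{Q}$, so each of $\sqrt a,\sqrt b$ is an integer and $\sqrt a+\sqrt b\in\mathbb{N}$, again a contradiction. Hence $|P(m)|\geq 1$.

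The second and final step is an elementary size estimate of three of the four factors in $P(m)$. Since $m$ is within distance $1$ of $\sqrt a+\sqrt b\leq 2\sqrt n$, each of
\begin{equation*}
|m-\sqrt a+\sqrt b|,\quad |m+\sqrt a-\sqrt b|,\quad |m+\sqrt a+\sqrt b|
\end{equation*}
is $\lesssim \sqrt n$. Therefore the product of these three factors is $\lesssim n^{3/2}$, and combining with $|P(m)|\geq 1$ yields
\begin{equation*}
\bigl\|\sqrt a+\sqrt b\bigr\| \;=\; |m-\sqrt a-\sqrt b| \;\gtrsim\; n^{-3/2}.
\end{equation*}

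The only genuinely delicate step is the non-vanishing of $P(m)$; the size estimates are immediate once the algebraic identity for $P$ is written down. I do not expect any other obstacle. The same strategy, applied to the $2^k$ sign-conjugates of $\sqrt{a_1}+\cdots+\sqrt{a_k}$, yields a lower bound of order $n^{-(2^k-1)/2}$ in general, which suggests that proving the sub-exponential upper bound claimed in the abstract genuinely requires a different idea.
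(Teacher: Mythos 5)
Your proof takes the conjugate-product (resultant) route rather than the paper's own argument for this theorem, which is the simpler ``square once and rearrange'' trick: writing $\sqrt a + \sqrt b = m + \varepsilon$ and squaring gives $2m\varepsilon + \varepsilon^2 - \sqrt{4ab}\in\mathbb{Z}$, so the problem reduces to how well $\sqrt{4ab}$ can approximate an integer, i.e.\ to the $k=1$ case, and the $n^{-3/2}$ bound follows at once. The squaring trick exploits something special to $k=2$ (a single squaring leaves only one cross term), while your conjugate polynomial $P(x) = (x^2-a-b)^2 - 4ab$ is exactly the mechanism the paper uses for $k=3$ (Proposition 1) and the one that generalizes; your closing remark that it gives $\gtrsim n^{-(2^k-1)/2}$ for general $k$ agrees with the paper's Remark (2), since $-(2^k-1)/2 = 1/2 - 2^{k-1}$. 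So the approach is correct and arguably more robust, at the cost of an exponentially worse exponent.

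There is, however, a small gap in your non-vanishing step. You say that $m = \pm(\sqrt a - \sqrt b)$ forces $\sqrt{ab}\in\mathbb{Q}$ and that this implies each of $\sqrt a,\sqrt b$ is an integer; the latter implication is false on its own (take $a=2$, $b=8$: then $\sqrt{ab}=4\in\mathbb{Q}$ but neither $\sqrt2$ nor $\sqrt8$ is an integer). You must use the full strength of $\sqrt a - \sqrt b = \pm m\in\mathbb{Z}$. A clean repair: if $m\neq 0$, multiply by $\sqrt a + \sqrt b$ to get $\sqrt a + \sqrt b = \pm(a-b)/m\in\mathbb{Q}$, and since $\sqrt a + \sqrt b$ is an algebraic integer it is then an integer, contradicting the hypothesis. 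The case $m=0$ is impossible here because $m$ is the nearest integer to $\sqrt a + \sqrt b\geq 2$. With that fix the proof is complete.
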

This has a nice proof: if $\sqrt{a} + \sqrt{b} = m + \varepsilon$ with $0 \leq |\varepsilon| \ll 1$, then squaring and rearranging leads to 
$  2m\varepsilon -  \sqrt{4ab} +  \varepsilon^2 \in \mathbb{N}.$
If $\sqrt{4ab} \in \mathbb{N}$, then this forces $2m |\varepsilon| \gtrsim 1$ implying $|\varepsilon| \gtrsim m^{-1} \gtrsim n^{-1/2}$. If not, we use $\|  \sqrt{4ab} \| \gtrsim 1/n$ to deduce $|\varepsilon| \gtrsim 1/(mn) \gtrsim n^{-3/2}$.  The bound is clearly sharp up to constants since
$$ \sqrt{a^2 + 1} +\sqrt{a^2-1} = 2a - \frac{1}{4 a^3} + o(a^{-3}).$$

 A quick inspection of the argument shows that the case $k=2$ is not so different from the case $k=1$ since squaring both sides of the equation reduces one to the other. This is no longer the case when $k \geq 3$.

\subsection{The sum of square roots problem} This is a special case of a notoriously difficult problem in Numerical Analysis and Computational Complexity Theory which has a number of different formulations. Following the formulation of Demaine, Mitchell and O'Rourke \cite[Problem 33]{demaine}, the question is, for given values $n,k \in \mathbb{N}$, to understand the smallest positive value that can be attained by
$$\left| \sum_{i=1}^{k} \sqrt{a_i} - \sum_{i=1}^{k} \sqrt{b_i} \right|$$
where the $2k$ integers $a_i, b_i$ are all chosen from $\left\{1,2,\dots, n\right\}$. 
One is interested in bounds that only depend on $n$ and $k$. The question arose in the context of comparing distances. The computational cost of deciding, by computation, whether $\sqrt{10} + \sqrt{11}$ is larger than $\sqrt{5} + \sqrt{18}$ depends on how close these the numbers can be (their difference is $2 \cdot 10^{-4}$). The problem seems to first have been mentioned by O'Rourke \cite{orourke} in 1981. A more general problem of deciding the sign of such expressions was posed in a 1976 paper of Garey, Graham and Johnson \cite{garey, grah}.
There is a big difference between the upper and the lower bounds. Burnikel, Fleischer, Mehlhorn and Schirra \cite{burn} showed that
$$  \sum_{i=1}^{k} \sqrt{a_i} \neq  \sum_{i=1}^{k} \sqrt{b_i} \implies \left| \sum_{i=1}^{k} \sqrt{a_i} - \sum_{i=1}^{k} \sqrt{b_i} \right| \gtrsim n^{- c \cdot 4^k}.$$
Explicit constructions are nowhere near as good. Note first that any such sum always lies in $[0, k \sqrt{n}]$ and that we can build approximately $\sim n^k$ different expressions:  the pigeonhole principle would then suggest that there should be examples where the gap is less than $\lesssim n^{1/2 - k}$. This was proven by Graham (see \cite{qian}) using constructions from the Prouhet–Tarry–Escott problem. Currently, the best bound is due to Qian and Wang \cite{qian} who proved that the expression is $ \lesssim n^{-2k + 3/2}$. Their argument is `local': the idea is to construct a suitable linear combination of  consecutive square roots so that a lot of cancellation occurs: they use the nice inequality
$$ \left|\sum_{i=0}^{m} \binom{m}{i} (-1)^i \sqrt{n+ i}\right| \leq \frac{(2m-3)!!}{2^m n^{m-1/2}} \qquad (\diamond).$$
Whether this upper bound is close to the truth depends on whether the best way to construct numbers close to integers is to use local cancellations in the sense of $(\diamond)$. However, there are no indications that this is actually the case: one would expect that there exist configurations of square roots which are quite different in size but then achieve very good cancellations and end up near an integer. Much stronger bounds should be true but this appears to be a very difficult problem. A nice example from the PhD thesis of Li \cite{cheng0, li} is
$$\sqrt{29} +\sqrt{1097}+\sqrt{3153} -\sqrt{226}- \sqrt{2324}-\sqrt{987} \sim 2.84 \cdot 10^{-20}.$$
Our problem is more restricted in the sense that cancellation constructions in the style of Graham and Qian-Wang \cite{qian} appear to be impossible because we are only allowed to add. Even without subtractions, there seem to be many examples that get close to integers: for example,
$$ \sqrt{3} + \sqrt{20} + \sqrt{23} = 11.000018\dots$$

\subsection{The case $k = 3$.} We return to our question: given $1 \leq a_1, \dots, a_k \leq n$, what is the smallest positive value of $\| \sqrt{a_1} + \dots + \sqrt{a_k} \|$?  The cases $k=1,2$ have nice and satisfying solutions. However, already the case $k=3$ appears to be quite nontrivial. There is a very nice argument showing that it is at least $\gtrsim n^{-7/2}$ that was communicated to us by Arturas Dubickas and Roger Heath-Brown.
\begin{proposition}
Let $1 \leq a,b,c \leq n$ be integers. Then
$$\sqrt{a} + \sqrt{b} + \sqrt{c} ~\notin \mathbb{N} \quad \implies \quad \| \sqrt{a} + \sqrt{b} + \sqrt{c} \| \gtrsim \frac{1}{n^{7/2}}.$$
\end{proposition}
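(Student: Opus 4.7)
The plan is to iterate the Angluin--Eisenstat squaring trick twice, which reduces the $k=3$ problem to bounding an expression of the form $|8m\sqrt{abc}-M|$ from below. Write $\sqrt a + \sqrt b + \sqrt c = m + \varepsilon$ with $m\in\mathbb{Z}$ and $|\varepsilon| = \|\sqrt a + \sqrt b + \sqrt c\|$, which we may assume small; note $2 \le m \lesssim \sqrt n$. Isolating $\sqrt c$ and squaring gives, after rearranging,
\[
2\sqrt{ab} + 2m\sqrt{c} \;=\; N + \delta_1,
\]
where $N := m^2 + c - a - b \in \mathbb{Z}$ satisfies $|N|\lesssim n$ (and is positive, since the left-hand side is at least $6$), and $\delta_1 := 2(m-\sqrt c)\,\varepsilon + \varepsilon^2$. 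Because $|m-\sqrt c| = |\sqrt a + \sqrt b - \varepsilon| \lesssim \sqrt n$, one has $|\delta_1|\lesssim \sqrt n\,|\varepsilon|$. Squaring once more cancels both remaining radicals and leaves only $\sqrt{abc}$:
\[
8m\sqrt{abc} \;=\; M + 2N\delta_1 + \delta_1^2, \qquad M := N^2 - 4ab - 4m^2 c \in \mathbb{Z},\quad |M|\lesssim n^2.
\]

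The next step is a lower bound on $|8m\sqrt{abc}-M|$, split by whether $abc$ is a perfect square. If it is, then $8m\sqrt{abc}-M$ is an integer; under the hypothesis $\varepsilon\ne 0$ it is nonzero, so $|2N\delta_1+\delta_1^2|\ge 1$, which forces $|\delta_1|\gtrsim 1/n$ and even yields the stronger $|\varepsilon|\gtrsim n^{-3/2}$. If $abc$ is not a perfect square, apply the conjugate identity
\[
|8m\sqrt{abc}-M|\cdot|8m\sqrt{abc}+M| \;=\; |64 m^2 abc - M^2| \;\ge\; 1,
\]
since the right-hand side is a nonzero integer; since $|8m\sqrt{abc}+M|\lesssim n^2$, this gives $|8m\sqrt{abc}-M|\gtrsim n^{-2}$, so $|2N\delta_1+\delta_1^2|\gtrsim n^{-2}$. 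Combined with $|N|\lesssim n$ and $|\delta_1|$ small, this yields $|\delta_1|\gtrsim n^{-3}$ and finally $|\varepsilon|\gtrsim |\delta_1|/\sqrt n \gtrsim n^{-7/2}$, as required.

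The main potential obstacle is the bookkeeping of degenerate sub-cases: $\delta_1=0$, $N=0$, or an unexpected vanishing after the second squaring. Each is dispatched directly: $\delta_1 = \varepsilon\bigl(2(m-\sqrt c)+\varepsilon\bigr)=0$ together with $|m-\sqrt c|\ge 1 \gg |\varepsilon|$ forces $\varepsilon=0$, contradicting the hypothesis; $N=0$ is impossible because the left-hand side of the first equation is at least $6$; and $64 m^2 abc = M^2$ would put $\sqrt{abc}\in\mathbb{Q}$, hence in $\mathbb{Z}$, which belongs to the other case. So the argument should go through cleanly to give the promised $n^{-7/2}$.
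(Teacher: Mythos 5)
Your argument is correct, and it takes a genuinely different route from the paper. The paper forms the full degree-$8$ resolvent polynomial
\[
p(x)=\prod_{\varepsilon_1,\varepsilon_2,\varepsilon_3\in\{-1,1\}}\bigl(x-(\varepsilon_1\sqrt a+\varepsilon_2\sqrt b+\varepsilon_3\sqrt c)\bigr),
\]
observes that it has integer coefficients, invokes Besicovitch's theorem on the $\mathbb{Q}$-linear independence of $1,\sqrt a,\sqrt b,\sqrt c$ (in the case of distinct square-free parts) to conclude $|p(m)|\ge 1$, and then divides out the other seven factors, each bounded by $3\sqrt n$, to get $n^{-7/2}$. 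You instead square twice ``by hand,'' driving all the algebraic content into the single quantity $8m\sqrt{abc}-M$, and then apply a one-step conjugate/norm trick to $\sqrt{abc}$ alone. That buys two things: you never need Besicovitch or any linear-independence input, and your case split on whether $abc$ is a perfect square handles all degenerate configurations (equal or proportional square-free parts) uniformly, whereas the paper's proof as written only discusses the generic case. The cost is slightly more bookkeeping ($N>0$, $\delta_1\ne 0$, the two-sided relation $|\varepsilon|\asymp|\delta_1|/\sqrt n$), all of which you dispatch correctly. Both methods are at heart norm bounds over $\mathbb{Q}$ — the paper uses the full degree-$8$ norm, you effectively use a partial norm after eliminating $\sqrt a$ and $\sqrt b$ by squaring — and unsurprisingly they land on the same exponent $-7/2$.
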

\begin{proof} 
Multiplying over all possible combinations of signs $\varepsilon_1, \varepsilon_2, \varepsilon_3 \in \left\{-1,1\right\}$, one can define a polynomial 
$$ p(x) = \prod_{\varepsilon_1, \varepsilon_2, \varepsilon_3 \in \left\{-1,1\right\}} \left( x - ( \varepsilon_1 \sqrt{a} + \varepsilon_2 \sqrt{b} + \varepsilon_3 \sqrt{c})\right).$$
One sees, by direct computation, that it is a polynomial all of whose coefficients are integers. It is also a classical result, see Besicovitch \cite{bes, bor}, that if $a,b,c$ have different square-free parts, then $1, \sqrt{a}, \sqrt{b}, \sqrt{c}$ are linearly independent over $\mathbb{Q}$. This means that if $n \in \mathbb{N}$, then $p(n) \neq 0$ and thus $|p(n)| \geq 1$. Suppose now that
$ \sqrt{a} + \sqrt{b} + \sqrt{c}$ is not an integer but very close to the integer $n$.  Using $|p(n)| \geq 1$ and that the product of the other 7 factors being $\leq (3 \sqrt{n})^7$ now shows the result. 
\end{proof}

As for upper bounds, Nicholas Marshall informed us of the identity
$$ \| \sqrt{(k-1)^2 +2} + \sqrt{(k+1)^2 + 2} + \sqrt{(2k)^2 - 8} \| \sim \frac{4}{k^5}$$
which gives an infinite family of examples showing
$ \| \sqrt{a} + \sqrt{b} + \sqrt{c} \| \lesssim n^{-5/2}$. Roger Heath-Brown
reports a similar identity, there appear to be many. All these examples have
in common that each square root is a small perturbation of an integer (i.e.
of the form $\sqrt{k^2 + \mbox{small}}$). One would perhaps be inclined to
believe that there should be many `non-perturbative' examples that are $\ll n^{-5/2}?$ 
Tom\'as Oliveira e Silva informed me of the example
$$ \|\sqrt{11075} + \sqrt{27187} + \sqrt{68057}\| = 1.26 \cdot 10^{-15} \sim \frac{0.4}{68057^3}$$
which is maybe suggestive of the bound $\lesssim n^{-3}$. Since we have $\sim n^3$ expressions 
mod 1, if they behaved randomly, the scaling would be $n^{-3}$. In that sense, one of way of posing the
question is whether the sum of three roots is sufficiently `random' to forget about the algebraic
structure of each individual summand.
\vspace{2pt}

There are a number of interesting related problems, some of which might be easier. One that appears
strictly easier concerns the set
$$A = \left\{ \sqrt{a} + \sqrt{b} \mod 1:  1 \leq a,b \leq n\right\} \subset [0,1].$$
It might be interesting to have a better understanding of its gap structure. We note that $A \cap [0, c/n^{3/2}] = \emptyset$ which
follows from Angluin-Eisenstat argument. Is it true that $A$ intersects every interval $J \subset [0,1]$ with length $ |J| \gtrsim n^{-3/2}$~?
Is it true that there are `few' large gaps at scale $\sim n^{-3/2}$? 

\subsection{Main Result.} We can now state our main result.
\begin{thm} There exists $c>0$ such that for any $k \in \mathbb{N}$ fixed, there exists $c_k >0$ so that for all $n \in \mathbb{N}$, there exist $1 \leq a_1, \dots, a_k \leq n$ with
$$ 0 < \| \sqrt{a_1} + \dots + \sqrt{a_k} \| \leq c_k \cdot  n^{- c \cdot k^{1/3}}.$$
\end{thm}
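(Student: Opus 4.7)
The plan is to construct the $k$-tuple by taking integers of the form $a_i = N^2 + r_i$, where $N$ is a large parameter of order $\sqrt{n}$ and the $r_i$'s are bounded signed integer perturbations. The Taylor expansion
\[
\sum_{i=1}^k \sqrt{N^2 + r_i} \;=\; kN \;+\; \sum_{m \ge 1} \binom{1/2}{m}\, \frac{S_m}{N^{2m-1}},
\qquad S_m = \sum_{i=1}^k r_i^m,
\]
reduces the original problem to controlling the fractional part of the tail series, since $kN \in \mathbb{Z}$. The objective is to use the $k$ integer degrees of freedom in the $r_i$'s to arrange that, for each $m \le M$, the contribution $\binom{1/2}{m}\, S_m / N^{2m-1}$ differs from an integer by a negligible amount; the residual is then dominated by the first uncancelled term $\binom{1/2}{M+1} S_{M+1}/N^{2M+1}$, which for bounded $|r_i|$ and $N$ of order $\sqrt{n}$ is $\lesssim n^{1/2 - M}$.

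The main device is a Prouhet--Tarry--Escott--type construction. For the odd-index power sums $S_1, S_3, \dots$, one can enforce $S_{2j+1} = 0$ by choosing the $r_i$'s so that the signed integer identity $\sum r_i^{2j+1} = 0$ holds for each $j \le L$; this is entirely unobstructed. For the even-index power sums $S_2, S_4, \dots$, which are necessarily positive as soon as any $r_i \neq 0$ and therefore cannot be zeroed, one instead arranges each $S_{2j}$ to be close to an integer multiple of the effective denominator $N^{4j-1}/|\binom{1/2}{2j}|$, so that its contribution to the fractional part is absorbed into an integer up to a small discrepancy. A counting argument then controls how many such simultaneous conditions can be realised with $k$ integer perturbations, and the exponent $M \asymp k^{1/3}$ emerges as the correct balance between the number of free integer variables and the number of effective (odd cancellation plus even alignment) constraints once the polynomial growth $|S_m| \le k R^m$ is taken into account.

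The core technical obstacle is precisely the positivity of the even power sums, which rules out the standard PTE solution and forces the more delicate alignment strategy described above. I expect the cube-root scaling to come from a careful pigeonhole count on the joint vector $(S_1, \dots, S_{2M}) \in \mathbb{Z}^{2M}$, where the combined positivity and bounded-perturbation constraints leave only $\asymp k^{1/3}$ independent directions of cancellation available. Once $M$ is chosen at this threshold, the residual $\lesssim n^{1/2 - M} \lesssim n^{-c\, k^{1/3}}$ completes the argument, and the constant $c_k$ absorbs the combinatorial overhead of the construction. A possible alternative route---useful as a sanity check---is a probabilistic one via Erd\H{o}s--Tur\'an and van der Corput-type bounds for the exponential sums $\sum_{a \le n} e(\xi \sqrt{a})$, but turning this into an existence statement with the explicit exponent $k^{1/3}$ appears to require precisely the sort of PTE-style construction outlined above.
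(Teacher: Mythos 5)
Your proposal takes a fundamentally different route from the paper, and unfortunately it runs into an obstruction that the paper itself warns about. The paper's proof is non-constructive: it uses a F\'ejer-type kernel $h(x)=\max\{1-s|x|,0\}$, expands $\sum_{a_1,\dots,a_k\le n} h(\sqrt{a_1}+\cdots+\sqrt{a_k})$ via Fourier series into a main term $n^k/s$ plus error terms involving the exponential sums $\sum_{a\le n} e^{2\pi i\ell\sqrt a}$, and then bounds those exponential sums using the van der Corput method (for small $\ell$) and the Vinogradov method (for large $\ell$). The exponent $k^{1/3}$ is a very specific artifact of the Vinogradov bound $\exp(-c(\log m)^3/(\log F)^2)$: with $s\sim n^A$ one gets a saving of roughly $n^{-c/A^2}$ per exponential sum, and absorbing the error into the main term $n^k/s$ requires $k\gtrsim A^3$, whence $A\lesssim k^{1/3}$.

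Your perturbative construction $a_i=N^2+r_i$ with a Prouhet--Tarry--Escott--style cancellation has a genuine gap, and it is precisely the issue you flag as ``the core technical obstacle'': in the pure-sum problem you cannot zero the even power sums $S_{2j}=\sum r_i^{2j}>0$, and the proposed ``alignment'' of $S_{2j}$ with large multiples of $N^{4j-1}/|\binom{1/2}{2j}|$ does not close. Concretely, to align $S_2$ you would need $S_2\sim N^3$, hence $\max|r_i|\gtrsim N^{3/2}$, but then the power-mean inequality forces $S_4\gtrsim S_2^2/k\sim N^6/k$, which is strictly between $0$ and $N^7$, so $\binom{1/2}{4}S_4/N^7$ is of size roughly $1/N\sim n^{-1/2}$ and cannot be made close to an integer. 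The same tension recurs at every level: the size needed to align $S_{2j}$ with $N^{4j-1}$ forces $S_{2j+2}$ into a forbidden intermediate range. This is exactly why the paper's author remarks that ``cancellation constructions in the style of Graham and Qian--Wang appear to be impossible because we are only allowed to add'' and why the paper abandons the local/explicit route entirely. If one keeps $|r_i|$ bounded and merely zeros the odd power sums, the leading error is $\sim S_2/N^3\sim n^{-3/2}$, i.e. you recover only the Angluin--Eisenstat bound. Finally, the claim that a pigeonhole count on $(S_1,\dots,S_{2M})$ yields $M\asymp k^{1/3}$ is asserted but never derived, and there is no reason a PTE-type count should produce a cube root (the natural PTE scaling for degree $L$ uses $\sim L^2$ variables, suggesting $k^{1/2}$, not $k^{1/3}$); the cube root in the theorem is specific to Vinogradov's method and would have to be rediscovered by an entirely different mechanism in your framework.
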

\textit{Remarks.}
\begin{enumerate}
\item The proof uses both the van der Corput method (needing $k \geq 241$) and the Vinogradov method: the implicit constant $c>0$ is \textit{very} small and 
\item The argument presented in Proposition 1 generalizes to imply the lower bound $\gtrsim n^{1/2 - 2^{k-1}}$.  A polynomial rate (in $k$) appears to be out of reach. 
\item The proof shows a slightly stronger result: we show that the largest gap in the set
$\sqrt{a_1} + \dots + \sqrt{a_k} \mod 1 \subset [0,1]$ is $\leq c_k \cdot n^{- c \cdot k^{1/3}}$. In particular,
while there are summands that are guaranteed to be close to an integer, the method also guarantees
the existence of summands close to, say, $\mathbb{N} + \sqrt{2}$.
\end{enumerate}

\subsection{Exponential sum estimates} We were originally motivated by the large amount of recent work on the structure of $\sqrt{n} \mod 1$ and related problems, see \cite{aisti, browning, elbaz, elkies,  lutsko, rad, rega, rud0, rud, sinai} and references therein that establish precise results on the gap structure of this sequence. It is a very natural question whether advances from this direction could be useful for our problem. 
 A somewhat standard approach, that we will also in the proof of the Theorem, is as follows. We use a function $h:\mathbb{R}\setminus \mathbb{Z} \rightarrow \mathbb{R}$  that is essentially a characteristic function for $|x| \leq 1/s$, say
$$ h(x) = \max \left\{1 - s \cdot |x|, 0 \right\},$$
where $s \gg 1$ is a large parameter. The number of solutions $(a_1, \dots, a_k)$ where $\| \sqrt{a_1} + \dots + \sqrt{a_k} \| \leq 1/s$ is approximately given by 
$$ \sum_{a_1,a_2,\dots, a_k = 1}^{n} h(\sqrt{a_1} + \sqrt{a_2} + \dots + \sqrt{a_k} )  =  \sum_{\ell \in \mathbb{Z}} \widehat{h}(\ell) \left(\sum_{a=1}^{n} e^{2\pi i \ell \sqrt{a}}\right)^k$$
We note that this also counts the trivial solutions where all $\sqrt{a_i} \in \mathbb{N}$ and there are $\sim n^{k/2}$ of those. In order to prove the existence of nontrivial solutions close to the origin, one is required to show that
$$ \frac{n^k}{s} +  \sum_{\ell \in \mathbb{Z} \atop \ell \neq 0} \widehat{h}(\ell) \left(\sum_{a=1}^{n} e^{2\pi i \ell \sqrt{a}}\right)^k \gg n^{k/2}.$$
We will argue that for a suitable choice of $s \ll n^{k/2}$, the exponential sums are sufficiently small to be absorbed by the main term which then guarantees the existence of (many) solutions. By considering the modified function $h_y = h(x-y)$ and realizing that
$ \widehat{h_y(\ell)}$ is merely a phase-shift away from $\widehat{h}(\ell)$, a phase-shift that immediately disappears when applying the triangle inequality, we see that the argument can be equally applied to $h(x-y)$ and thus really is a statement about the number of solutions close to any arbitrary $0 \leq y \leq 1$ (not just 0).\\

The main bottleneck in the argument appears to be getting good control on the exponential sum.
 The exponent pair hypothesis 
$$ \mbox{would suggest that maybe} \qquad \left|\sum_{a=1}^{n} e^{2\pi i \ell \sqrt{a}} \right| \leq c_{\varepsilon} \cdot \ell^{\varepsilon} \cdot n^{1/2}.$$
It is not clear to us whether, given the completely explicit nature of the exponential sum, it is possible to obtain such a result (see \S 2.4).
Such a bound would imply the existence of examples with 
$$0 < \| \sqrt{a_1} + \dots + \sqrt{a_k}\| \lesssim n^{-k/2 + \varepsilon}.$$
There is some indication that this might be a natural limit of this very simple approach. Even simpler counting arguments suggest that one might perhaps except a rate of at least $\lesssim n^{1/2 - k}$. Given the large number of existing area, one might be hopeful that further improvements are possible.

\section{Proof}

\subsection{Proof of the Theorem.} A natural setting for the problem in terms of Fourier Analysis is to reduce everything mod 1 and work on the one-dimensional torus. We use the function $h:\mathbb{T} \rightarrow \mathbb{R}$
$$ h(x) = \max \left\{1 - s \cdot |x|, 0 \right\},$$
where $s \gg 1$ is a large parameter that will then be suitable chosen. It may be helpful to think of $s \sim n^A$ with $A \sim  c \cdot k^{1/3} \ll k/2$ which is the scale it will end up being. For $\ell \neq 0$, the Fourier coefficients in
$$ h(x) = \sum_{\ell \in \mathbb{Z}} \widehat{h}(\ell) e^{2 \pi i \ell x}$$
have a simple closed-form expression: $\widehat{h}(0) = 1/s$ and, for $\ell \neq 0$,
$$ \widehat{h}(\ell) = \frac{1}{\pi^2} \frac{ s}{\ell^2}\sin{\left(\frac{\ell \pi}{s}\right)}^2.$$ 
The precise form of the Fourier coefficients will not be important. They are non-negative and a convenient upper bound for them is
 $$ \widehat{h}(\ell) \lesssim  \begin{cases} 1/s \qquad &\mbox{if}~\ell \lesssim s \\
s/\ell^2 \qquad &\mbox{if}~\ell \gtrsim s. \end{cases}$$ 
Plugging in and exchanging the order of summation
\begin{align*}
\# \left\{(a_1, \dots, a_k) : \| \sqrt{a_1} + \dots + \sqrt{a_k}\| \leq 1/s \right\} &\geq \sum_{a_1,\dots, a_k = 1}^{n} h(\sqrt{a_1} + \dots + \sqrt{a_k} ) \\
 &=  \sum_{a_1,\dots, a_k = 1}^{n}\sum_{\ell \in \mathbb{Z}} \widehat{h}(\ell) e^{2 \pi i \ell (\sqrt{a_1} + \dots + \sqrt{a_k})} \\
 &= \sum_{\ell \in \mathbb{Z}} \widehat{h}(\ell) \left(\sum_{a=1}^{n} e^{2\pi i \ell \sqrt{a}}\right)^k.
 \end{align*}
The frequency $\ell = 0$ leads to a sizeable contribution since
$$ \widehat{h}(0) = \frac{1}{s} \qquad \mbox{and} \qquad  \left(\sum_{a=1}^{n} e^{2\pi i 0 \sqrt{a}}\right)^k = n^k.$$
Thus
$$ \sum_{\ell \in \mathbb{Z}} \widehat{h}(\ell) \left(\sum_{a=1}^{n} e^{2\pi i \ell \sqrt{a}}\right)^k = \frac{n^k}{s} + \sum_{\ell \in \mathbb{Z}\atop \ell \neq 0} \widehat{h}(\ell) \left(\sum_{a=1}^{n} e^{2\pi i \ell \sqrt{a}}\right)^k.$$
Since $\widehat{h}(\ell) = \widehat{h}(-\ell)$, we can deduce
 $$ \sum_{\ell \in \mathbb{Z}} \widehat{h}(\ell) \left(\sum_{a=1}^{n} e^{2\pi i \ell \sqrt{a}}\right)^k \geq \frac{n^k}{s} - 2 \sum_{\ell =1}^{\infty} \widehat{h}(\ell) \left|\sum_{a=1}^{n} e^{2\pi i \ell \sqrt{a}}\right|^k.$$
  The trivial solutions, where all the $a_i$ are square numbers themselves, contribute 
 $$ \sum_{a_1,a_2,\dots, a_k = 1}^{n} h(\sqrt{a_1} + \sqrt{a_2} + \dots + \sqrt{a_k} ) =\left\lfloor \sqrt{n} \right\rfloor^k \qquad \mbox{trivial solutions}$$
 that we are not interested in. Our goal is to show, for a suitable choice of $s \ll n^{k/2}$, 
 $$  \sum_{\ell \in \mathbb{Z}\atop \ell \neq 0} \widehat{h}(\ell) \left(\sum_{a=1}^{n} e^{2\pi i \ell \sqrt{a}}\right)^k \ll \frac{n^k}{s}$$
  which then implies the existence of many ($\gtrsim n^k/s \gg n^{k/2}$) solutions of $ \| \sqrt{a_1} + \dots + \sqrt{a_k} \| \lesssim 1/s$.
  Since this number exceeds the number of trivial solutions, there has to be one that is not an integer. We note that this is one of the cases in the argument where one might hope for some improvement: it is perfectly conceivable that many of the exponential sums have an actual positive contribution (whereas our argument, bounding them by their absolute value, assumes that they are always very negative). In particular, since there are always $n^{k/2}$ trivial solutions, independently of how large $s$ is, we can deduce that for $s \gg n^{k/2}$ the net contribution of the exponential sums has to be positive. However, this seems to be harder to quantify than bounds on the absolute value.\\

\textbf{Setup.} We can now describe the overall setup.  Since we leave the constant $c>0$ in the exponent $-c \cdot k^{1/3}$ unspecified 
and always have the trivial exponent $-1/2$, it suffices to prove the inequality for $k \geq k_0$ sufficiently large (one part of our argument requires $k \geq 240$). We will think of $k$ as a large fixed number and of the scale $s \sim n^A$ as a polynomial quantity where $A$ is to be determined. The goal of the proof is to derive suitable conditions on $k$ and $A$ under which we have
$$   \sum_{\ell =1}^{\infty} \widehat{h}(\ell) \left|\sum_{a=1}^{n} e^{2\pi i \ell \sqrt{a}}\right|^k \ll \frac{n^k}{s}$$
which then implies the result at scale $1/s \sim n^{-A}$. In particular, we would like $A$ to be as large as possible.
We start by splitting the sum as
\begin{align*}
\sum_{\ell =1}^{\infty} \widehat{h}(\ell) \left|\sum_{a=1}^{n} e^{2\pi i \ell \sqrt{a}}\right|^k &\leq \sum_{1 \leq \ell \leq s^{2+\varepsilon}}^{\infty} \widehat{h}(\ell) \left|\sum_{a=1}^{n} e^{2\pi i \ell \sqrt{a}}\right|^k \\
&+  \sum_{\ell \geq  s^{2+\varepsilon}}^{\infty} \widehat{h}(\ell) \left|\sum_{a=1}^{n} e^{2\pi i \ell \sqrt{a}}\right|^k,
\end{align*}
where $\varepsilon > 0$ is any arbitrarily small number (whose precise value will turn out to not be very important).
The second sum turns out to be harmless since, using the Taylor expansion together with the trivial estimate $\leq n$ on the exponential sum gives
\begin{align*}
 \sum_{\ell \geq  s^{2+\varepsilon}}^{\infty} \widehat{h}(\ell) \left|\sum_{a=1}^{n} e^{2\pi i \ell \sqrt{a}}\right|^k  &\lesssim  \sum_{\ell \geq  s^{2+\varepsilon}}^{\infty} \frac{s}{\ell^2}  n^k \lesssim \frac{n^k}{s^{1+\varepsilon}} \ll \frac{n^k}{s}.
\end{align*}
It thus remains to analyze the behavior of the first sum. Again appealing to the unspecified nature of the constant $c>0$, we can additionally assume that $A \geq 4$ (this is not really crucial for the argument but simplifies exposition). We can then split the remaining sum into summation over three different regions
\begin{align*}
  \sum_{1 \leq \ell \leq s^{2+\varepsilon}}^{} \widehat{h}(\ell) \left|\sum_{a=1}^{n} e^{2\pi i \ell \sqrt{a}}\right|^k &\lesssim  \sum_{1 \leq \ell \leq n^4}^{} \frac{1}{s} \left|\sum_{a=1}^{n} e^{2\pi i \ell \sqrt{a}}\right|^k + \sum_{n^4 \leq \ell \leq s}^{} \frac{1}{s} \left|\sum_{a=1}^{n} e^{2\pi i \ell \sqrt{a}}\right|^k\\
  &+ \sum_{s \leq \ell \leq s^{2+\varepsilon}}^{} \frac{s}{\ell^2} \left|\sum_{a=1}^{n} e^{2\pi i \ell \sqrt{a}}\right|^k.
\end{align*}
 The remainder of the argument is as follows: the first sum is benign in the sense that we can use van der Corput estimates to show it to be $\ll n^k/s$ for $k > 240$. The remaining two sums are only split because of the different algebraic structure of the coefficient ($1/s$ vs $s/\ell^2$). We will use the same approach to treat the exponential sum in that regime. Iterated derivatives of the phase function are sufficiently large for Vinogradov's method to apply which leads to a polynomial gain.\\

\textbf{van der Corput estimates.}
We use the van der Corput Method to prove that
$$ \max_{1 \leq \ell \leq n^4} \left|\sum_{a=1}^{n} e^{2\pi i \ell \sqrt{a}}\right| \lesssim n^{59/60}.$$
For all $k > 240$, we can then deduce
$$ \sum_{1 \leq \ell \leq n^{4}}^{} \frac{1}{s} \left|\sum_{a=1}^{n} e^{2\pi i \ell \sqrt{a}}\right|^k \leq  \frac{n^4}{s} n^{\frac{59k}{60}} \ll \frac{n^k}{s}$$
which shows the sum to be small compared to the main term.
We quickly sketch the relevant details. One formulation of the van der Corput method is as follows (taken from Tenenbaum \cite{tenenbaum}): if $f:[m, 2m] \rightarrow \mathbb{R}$ is $R$ times differentiable and, for all $1 \leq r \leq R$ and all $m \leq x \leq 2m$ one has
$$ F \cdot m^{-r} \lesssim |f^{(r)}(x)| \lesssim F \cdot m^{-r},$$
then one obtains
$$ \left| \sum_{m \leq x \leq 2m} e^{2 \pi i f(x)} \right| \lesssim \left( F^u m^{-v} + F^{-1} \right)m,$$
where $u = 1/(2^R-2)$ and $v = R/(2^R-2)$. In our setting, we have $f(x) = \ell \sqrt{x}$ and therefore $F = \ell \sqrt{m}$ where the constants are absolute (there is an implicit constant depending on the number of the derivatives that grows in $r$ but since we will be able to set $R=5$, these constants can be taken to be absolute). Note that $F^{-1}m \lesssim m^{1/2}$, so this term is completely harmless. It remains to deal with the term $F^u m^{-v} m$.
Since, by assumption, $\ell \leq n^4$, we get the estimate
\begin{align*}
 \left| \sum_{m \leq x \leq 2m} e^{2 \pi i \ell \sqrt{x}} \right| &\lesssim F^u m^{-v} m \leq (n^4 \sqrt{m})^u  m^{-v} m \lesssim n^{4 u} m^{1 + u/2 - v}.
 \end{align*}
Summation over all dyadic scales gives
$$ \left| \sum_{1 \leq x \leq n} e^{2 \pi i \ell \sqrt{x}} \right| \lesssim n^{4 u} n^{1 + u/2 - v} = n^{1 + 9u/2 - v}.$$
It thus remains to choose $R$ in such a way that $9u/2 - v < 0$ which leads to the choice $R = 5$. We note that the argument requires $k > 240$, however, since we do not control the implicit constant $c$ in the main result, this is sufficient.\\

\textbf{Vinogradov method.}
It remains to analyze the other two remaining sums, those ranging over $n^4 \leq \ell \leq s$ and $s \leq \ell \leq s^{2+\varepsilon}$.
We use the Vinogradov method in the formulation given in the book by Iwaniec-Kowalski \cite{iwaniec}.
\begin{thm}[Vinogradov  \cite{iwaniec}] Let $f(x)$ be a smooth function on $[m, 2m]$ satisfying, for all $j \geq 1$ and all $m \leq x \leq 2m$ that
$$ \alpha^{-j^3} F \leq \frac{x^j}{j!} \cdot \left| f^{(j)}(x)\right| \leq \alpha^{j^3} F,$$
where $F \geq m^4 \geq 2$ and $\alpha \geq 1$. Then
$$ \left| \sum_{m \leq x \leq 2m} e(f(x)) \right| \lesssim \alpha \cdot m \cdot \exp\left( - 2^{-18} \frac{(\log{m})^3}{(\log F)^2}\right)$$
and the implied constant is absolute.
\end{thm}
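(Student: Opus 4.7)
The plan is to follow Vinogradov's method in its classical form: reduce the exponential sum estimate to a mean value estimate for a Diophantine system (the Vinogradov mean value problem), then invoke that mean value theorem as the combinatorial core. The dyadic length $m$ and the amplitude $F \geq m^4$ together determine a natural choice of polynomial degree $k \asymp (\log F)/(\log m)$ (necessarily $\geq 4$ by the hypothesis on $F$), which is the degree at which the Taylor expansion of $f$ will be truncated. The growth hypothesis $|x^j f^{(j)}(x)/j!| \leq \alpha^{j^3} F$ ensures that for $j$ up to this optimized $k$, the Taylor coefficients are not much larger than $F$ (the factor $\alpha^{j^3}$ causes only logarithmic losses if $k$ is not too large, which is why the final exponent involves $(\log m)^3 /(\log F)^2$).

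I would proceed in the following steps. First, partition $[m,2m]$ into subintervals of length $H$, chosen so that on each the Taylor polynomial of degree $k$ approximates $f$ to within $o(1)$, i.e.\ $H \approx m / F^{1/k}$. Second, apply the Weyl shift-averaging identity
\[
\sum_{x=m}^{2m} e(f(x)) = \frac{1}{H}\sum_{x}\sum_{h=1}^{H} e(f(x+h)) + O(H),
\]
Taylor expand $f(x+h)$ in $h$ to degree $k$, and raise to a high power $2s$. After Hölder and an averaging over $x$, one reduces the bound to controlling
\[
J_{s,k}(H) = \int_{\mathbb{T}^{k}} \Bigl|\sum_{h=1}^{H} e(\beta_1 h + \beta_2 h^2 + \cdots + \beta_k h^k)\Bigr|^{2s} d\beta,
\]
which counts integer solutions of the Vinogradov system $\sum_{i \leq s} h_i^j = \sum_{i \leq s} (h_i')^j$ for $j = 1, \ldots, k$. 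Third, invoke the Vinogradov mean value theorem: for $s$ at least a suitable constant times $k^2$, one has $J_{s,k}(H) \ll H^{2s - k(k+1)/2 + \eta}$ with $\eta$ arbitrarily small (and $\eta = 0$ unconditionally after Bourgain--Demeter--Guth). Finally, optimize $k$, $H$, and $s$; the cubic-log factor $(\log m)^3/(\log F)^2$ arises from balancing the polynomial gain $k(k+1)/2$ in the mean value estimate against the requirement $k \asymp \log F / \log m$, while the explicit constant $2^{-18}$ comes from standard bookkeeping in the Iwaniec--Kowalski exposition.

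The main obstacle, as usual with Vinogradov, is the mean value theorem itself, which is the deep ingredient and is not an elementary calculation. The secondary difficulty is propagating the coefficient bound $\alpha^{j^3} F$ rather than simply $F$ through the shift-averaging and through the diagonal contribution in $J_{s,k}$: one must verify that each occurrence of $\alpha^{j^3}$ contributes only a multiplicative $\alpha$ in the final bound, which is the purpose of the stringent assumption $j^3$ in the exponent (exactly matching the $(\log m)^3$ scale at which $k$ saturates). This bookkeeping, together with the absolute nature of the implied constant, is standard but tedious and is precisely why the statement is quoted here as a black box.
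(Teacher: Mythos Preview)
The paper does not prove this statement at all: it is quoted verbatim as a theorem from Iwaniec--Kowalski \cite{iwaniec} and used as a black box in the proof of the main result. There is therefore nothing to compare your proposal against.

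Your outline is a fair high-level summary of the classical Vinogradov argument (shift-and-average, Taylor-truncate to a polynomial of degree $k \asymp \log F/\log m$, reduce to the mean value $J_{s,k}$, and optimize), and you yourself recognize in the last paragraph that the statement is ``quoted here as a black box.'' That is exactly what happens. One small caveat: the Iwaniec--Kowalski formulation predates Bourgain--Demeter--Guth and does not rely on it; the constant $2^{-18}$ comes from the older quantitative mean-value bounds, so invoking decoupling, while valid, is anachronistic relative to the cited source.
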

We are dealing with the special case is $f(x) = \ell \sqrt{x}$. There, 
$$ \frac{x^j}{j!} \cdot \left| f^{(j)}(x)\right| = c_j \cdot \ell \cdot \sqrt{x},$$
where $|c_j| \leq 1/2^j$. This allows us to set $F = \ell \sqrt{n}$ and $\alpha = 2$. Since $\ell \geq n^4$, we have
$ F = \ell \sqrt{n} \geq n^{4.5} \geq n^4 \geq m^4$ and thus the condition $F \geq m^4$ is always satisfied. Moreover, since we only care about the frequencies $n^4 \leq \ell \leq s^{2+\varepsilon} = n^{2A + \varepsilon}$, we have
$$ F  = \ell \sqrt{m} \leq n^{2A + \varepsilon} n^{1/2} \leq n^{2A + 1}$$
and thus $(\log F)^2 \lesssim (2A + 1) \log{n}$. As a consequence, we have
$$ \exp\left( - 2^{-18} \frac{(\log{m})^3}{(\log F)^2}\right) = m^{ - 2^{-18} \frac{(\log{m})^2}{(\log F)^2}} \lesssim m^{ -\frac{c}{A^2} \frac{(\log m)^2}{(\log{n})^2}}.$$
Dyadic summation leads to
\begin{align*}
 \left| \sum_{1 \leq x \leq n} e^{2 \pi i \ell \sqrt{x}} \right| &\leq \sum_{1 \leq r \leq \log{n}} \left|  \sum_{2^r \leq x \leq 2^{r+1}} e^{2 \pi i \ell \sqrt{x}}  \right| \lesssim  \sum_{1 \leq r \leq \log{n}} 2^{r\left( 1 - \frac{c}{A^2} \frac{r^2}{(\log n)^2}\right)}.
\end{align*}
We distinguish two cases: if $r \leq (\log n)/2$, then the sum is trivially $\lesssim \sqrt{n}$ which is tiny. If $(\log{n})/2 \leq r \leq \log{n}$, we deduce
$$  \left| \sum_{1 \leq x \leq n} e^{2 \pi i \ell \sqrt{x}} \right| \lesssim  \sum_{ (\log{n})/2 \leq r \leq \log{n}} 2^{r\left( 1 - \frac{c/4}{A^2} \right)}\lesssim n^{1 - c'/A^{2}}.$$
Note that this estimate is uniform in $n^4 \leq \ell \leq n^{2A+2}$.
This is enough to bound the two remaining sums. For the first part, we may simply consider the maximum
\begin{align*}
   \sum_{n^4 \leq \ell \leq s}^{} \frac{1}{s} \left|\sum_{a=1}^{n} e^{2\pi i \ell \sqrt{a}}\right|^k \lesssim \max_{n^4 \leq \ell \leq n^A}\left|\sum_{a=1}^{n} e^{2\pi i \ell \sqrt{a}}\right|^k  \lesssim n^{(1-c'/A^2)k}.
\end{align*}
As for the remaining sum, we can use
$$ \sum_{s \leq \ell \leq s^{2+\varepsilon}} \frac{s}{\ell^2} \leq  \sum_{\ell \geq s} \frac{s}{\ell^2} \sim 1$$
to
argue similarly and obtain again
\begin{align*}
 \sum_{s \leq \ell \leq s^{2+\varepsilon}}^{} \frac{s}{\ell^2} \left|\sum_{a=1}^{n} e^{2\pi i \ell \sqrt{a}}\right|^k \lesssim \max_{s \leq \ell \leq n^{(2+\varepsilon)A}}\left|\sum_{a=1}^{n} e^{2\pi i \ell \sqrt{a}}\right|^k  \lesssim n^{(1-c'/A^2)k}.
\end{align*}

It remains to understand when
$$ n^{(1- cA^{-2} )k} \ll \frac{n^k}{s} = n^{k-A}$$
which needs $k \geq A^3/c$. Conversely, for $k$ sufficiently large, we may choose $A$ as large as $A \leq c \cdot k^{1/3}$ which implies the result. $\qed$

\subsection{Concluding Remarks.}
One of the advantages is that the exponential sum $\sum_{1 \leq a \leq n} e^{2 \pi i \ell \sqrt{a}}$ is completely explicit. One way one could hope to avoid it is to relate it to the continuous integr to relate
$$ \sum_{1 \leq a \leq n} e^{2 \pi i \ell \sqrt{a}}  \qquad \mbox{and the integral} \qquad  \int_1^n e^{2 \pi i \ell \sqrt{x} - \nu x } dx,$$
where we refer to \cite[Section I.6, Theorem 6.4]{tenenbaum} for details. There exists an explicit closed-form expression for the antiderivative $X= \int_1^n e^{2 \pi i \ell \sqrt{x} - \nu x } dx$
$$X =\frac{i \left(\sqrt[4]{-1} \sqrt{2} \pi  \ell  e^{\frac{i \pi  \ell ^2}{2 \nu }}
   \text{erfi}\left(\frac{\left(\frac{1}{2}+\frac{i}{2}\right) \sqrt{\pi } \left(\ell -2 \nu  \sqrt{x}\right)}{\sqrt{\nu }}\right)+2
   \sqrt{\nu } e^{-2 i \pi  \left(\nu  x-\ell  \sqrt{x}\right)}\right)}{4 \pi  \nu ^{3/2}}\big|_1^n $$
The presence of a completely explicit term that induces further oscillations does seem to inspire some hope that are more precise understanding might be possible, however, it does not appear to be straightforward.\\

\textbf{Acknowledgment.} The author is grateful for interesting discussions with Roger Heath-Brown, Arturas Dubickas, Charles Greathouse, Nicholas Marshall, Tom\'as Oliveira e Silva and Victor Reis. I am especially indebted to Igor Shparlinski for insightful remarks and finding a gap in an earlier version of the manuscript.


\begin{thebibliography}{10}

\bibitem{aisti} C. Aistleitner, V. Blomer, and M. Radziwill, Triple correlation and long gaps in the spectrum of flat tori, Journal of the European Mathematical Society (2023), to appear.

\bibitem{angluin} D. Angluin and S. Eisenstat, How close can $\sqrt{a} + \sqrt{b}$ be to an integer?, Yale Technical Report 1279, 2004

\bibitem{bes} A. S. Besicovitch, On the linear independence of fractional powers of integers.
J. London Math. Soc. 15 (1940), 3--6.

\bibitem{bor} I. Boreico. Linear Independence of Radicals. The Harvard College Mathematics Review, 2(1): 87--92, 2008.

\bibitem{browning} T. Browning and I. Vinogradov,  Effective Ratner theorem for ASL(2,R) and gaps in $\sqrt{n}$ modulo 1,
J. London Math. Soc. 94 (2016), 61--84

\bibitem{burn} C. Burnikel, R. Fleischer, K. Mehlhorn, S. Schirra, A Strong and Easily Computable Separation Bound for Arithmetic Expressions Involving Radicals,Algorithmica 27 (2000), p. 87–99

\bibitem{cheng0} Q. Cheng and Y. Li, On the minimum gap between sums of square roots of small integers. Theoretical Computer Science 412 (2011), 5458-5465.

\bibitem{cheng} Q. Cheng, M. Xianmeng, S. Celi and C. Jiazhe, Bounding the sum of square roots via lattice reduction. Mathematics of Computation 79 (2010): p. 1109--1122.

\bibitem{demaine} E. Demaine, J. Mitchell and J. O’Rourke, The Open Problems Project, Problem 33: Sum of Square Roots, http://cs.smith.edu/~orourke/TOPP/P33.html.

\bibitem{elbaz} D. El-Baz, J. Marklof, I. Vinogradov, The two-point correlation function of the fractional parts of $\sqrt{n}$ is Poisson. Proc. Amer. Math. Soc. 143 (2015), 2815-2828.

\bibitem{elkies} N. Elkies and C. McMullen, Gaps in $\sqrt{n}$ mod 1 and ergodic theory, Duke Math. J. 123 (2004), p. 95 -- 139 (2004)

\bibitem{garey} M. Garey, R. Graham and D. Johnson, Some NP-complete geometric problems, Proceedings of the eighth annual ACM symposium on Theory of Computing (1976), p. 10-22

\bibitem{grah} R. Graham, Some of my favorite problems, in: 50 Years of Combinatorics, Graph Theory, and Computing, CRC Press, 2020, 21-36.


\bibitem{graham}  S. Graham and G. Kolesnik, Van der Corput's method of exponential sums, Cambridge University Press, 1991.


\bibitem{iwaniec} H. Iwaniec and E. Kowalski. Analytic number theory. Amer. Math. Soc., 2004.

\bibitem{li} Y. H. Li, Finding minimum gaps and designing key derivation functions, PhD Thesis, University of Oklahoma, 2011.

\bibitem{lutsko} C. Lutsko, Long-range correlations of sequences modulo 1, Journal of Number Theory 234 (2022): p. 333--348.

\bibitem{rega} S. Regavim, Minimal gaps and additive energy in real-valued sequences. The Quarterly Journal of Mathematics, 74 (2023), 825-866.

\bibitem{orourke} J. O’Rourke,  Advanced problem 6369. Amer. Math. Monthly, 88 (1981), 769.

\bibitem{qian} J. Qian and A. Wang, How much precision is needed to compare two sums of square roots of integers?,
Information Processing Letters 100 (2006), p. 194-198

\bibitem{rad} M. Radziwill and A, Shubin, Poissonian Pair Correlation for $\alpha n^{\theta} \mod 1$. arXiv:2304.04621

\bibitem{rud0} Z. Rudnick and P. Sarnak, The pair correlation function of fractional parts of polynomials.
Comm. in Math. Phys., 194(1): 61--70, 1998.

\bibitem{rud} Z. Rudnick and N. Technau, The metric theory of the pair correlation function for small non‐integer powers. Journal of the London Mathematical Society, 106 (2022), p. 2752-2772.

\bibitem{sinai} Ya. G. Sinai, Statistics of gaps in the sequence $\sqrt{n}$, Dynamical systems and group actions, Contemp. Math., vol. 567, Amer. Math. Soc., Providence, RI, 2012, pp. 185--189

\bibitem{stein} S. Steinerberger, Poissonian pair correlation in higher dimensions. Journal of Number Theory 208 (2020), p. 47-58.

\bibitem{tenenbaum} G. T\'enenbaum, Introduction to Analytic and Probabilistic Number Theory 163. American Mathematical Soc., 2015.
\end{thebibliography}
\end{document}